\documentclass[11pt]{amsart}
\usepackage{amsfonts,amssymb,amsmath,amsthm}

\setlength{\topmargin}{-57pt}
\setlength{\headheight}{12truept}
\setlength{\headsep}{25pt}
\setlength{\footskip}{37pt}
\setlength{\hoffset}{10mm}
\setlength{\voffset}{39pt}
\setlength{\oddsidemargin}{-5mm}
\setlength{\evensidemargin}{-5mm}
\setlength{\textheight}{235mm}
\setlength{\textwidth}{150mm}

\theoremstyle{plain}
\newtheorem{theorem}{Theorem}

\newtheorem{cor}{Corollary}

\newtheorem{prop}{Proposition}

\theoremstyle{definition}

\makeatletter

\@addtoreset{equation}{section}
\makeatother

\def\sts#1#2{\left\{#1\atop#2\right\}}

\def \Q{{\mathbb Q}}

\allowdisplaybreaks[4]

\author{Kalyan Chakraborty} 
\address{
Harish-Chandra Research Institute, Allahabad, 211019, Uttar Pradesh, India\\ 
Kerala school of Mathematics, Kunnamangalam, Kozhikode, 673571, Kerala, India
}
\email{kalyan@hri.res.in, kalychak@ksom.res.in
}

\author{Takao Komatsu}
\address{Department of Mathematical Sciences, School of Science, 
Zhejiang Sci-Tech University, 
Hangzhou 310018, 
China
}
\email{komatsu@zstu.edu.cn}

\keywords{Bernoulli numbers; hypergeometric Bernoulli numbers; hypergeometric functions; Dirichlet characters}
\subjclass{Primary 11B68; Secondary 11B37, 33C15.}

\title{Generalized hypergeometric Bernoulli numbers}

\begin{document}

\begin{abstract}
We introduce  generalized hypergeometric Bernoulli numbers for Dirichlet characters. We study their properties, including relations, expressions and determinants. At the end in Appendix we derive first few expressions of these numbers. 
\end{abstract}

\maketitle

\section{Introduction}\label{sect:intro} 

Let  ${}_1 F_1(a;b;z)$ denote the confluent hypergeometric function defined by 
$$
{}_1 F_1(a;b;z)=\sum_{n=0}^\infty\frac{(a)^{(n)}}{(b)^{(n)}}\frac{z^n}{n!}  
$$ 
with the rising factorial $(x)^{(n)}=x(x+1)\dots(x+n-1)$ ($n\ge 1$) and $(x)^{(0)}=1$. 
Also for $N\ge 1$, hypergeometric Bernoulli numbers $B_{N,n}$ (\cite{AK1, HN1,HN2,Ho1,Ho2,Kamano2,Ng}) are given by 
\begin{equation}  
\frac{1}{{}_1 F_1(1;N+1;t)}=\frac{t^N/N!}{e^t-\sum_{n=0}^{N-1}t^n/n!}=\sum_{n=0}^\infty B_{N,n}\frac{t^n}{n!} 
\label{def:hgb} 
\end{equation}   
with $B_{1,n}=B_n$, the classical Bernoulli numbers, defined by 
$$
\frac{t}{e^t-1}=\sum_{n=0}^\infty B_n\frac{t^n}{n!}\,. 
$$  
In \cite{HuKim} a result of Kamano on a multiple binomial sum is generalized, higher order hypergeometric Bernoulli polynomials are defined, and its differential equation and recursion formulas are studied among other properties. It is well-known that some of the known properties of the Bernoulli numbers can be obtained from fundamental relationships between complete and elementary symmetric functions. In \cite{HuKim18} two new closed form representations for the Apostol-Bernoulli polynomials are derived as natural generalizations of those of the Bernoulli polynomials.    
In \cite{Merca} an infinite family of relationships between complete and elementary symmetric functions is introduced. From these relationships, the author deduces connections between some lacunary recurrence relations with gaps of length $2 r$ for the Bernoulli numbers and the integer partitions into at most $r-1$ parts.  
In \cite{Zhu} new bounds for the ratio of two adjacent even-indexed Bernoulli numbers are obtained, solving Qi's conjecture on the related topic, getting a tighter lower bound for this ratio is obtained, and moreover, some conjectures on related topics are proposed. 

In \cite{Komatsu}, hypergeometric Cauchy numbers are defined and studied as analog of the hypergeometric Bernoulli numbers. This concept is further generalized in \cite{KY} by introducing the general hypergeometric Cauchy polynomials. 
In \cite{KZ}, the classical Euler numbers are generalized to define the higher order hypergeometric Euler numbers. 
Recently, in \cite{HuKomatsu}, by using the Hasse-Teichm\"uller derivatives, two explicit expressions for the related numbers of higher order Appell polynomials are obtained. One of them presents a determinant expression for the related numbers of higher order Appell polynomials, which involves several determinant expressions of special numbers, such as the higher order generalized hypergeometric Bernoulli and Cauchy numbers

In this paper, we introduce  generalized hypergeometric Bernoulli numbers for  Dirichlet characters. 
Let $\chi$ be a  Dirichlet character with conductor $f$ and $N$ be a positive integer.
We define the {\it generalized hypergeometric Bernoulli numbers} $B_{N,n,\chi}$ 
by
\begin{equation} 
\sum_{a=1}^f \frac{\chi (a) \ t^Ne^{at}/N!}{ e^{ft}-\sum_{n=0}^{N-1} \frac{(ft)^n}{n!} } =
\sum_{n=0}^{\infty} B_{N,n,\chi}\frac{t^n}{n!}\quad(|t|<2\pi/f)\,.
\label{def:gbn}
\end{equation} 
One recovers  the classical generalized Bernoulli number (see, e.g., \cite{Agoh,AIK,CE,W}) $B_{n,\chi}:=B_{1,n,\chi}$, which 
are simply Bernoulli numbers twisted by a Dirichlet character.  As seen in \cite[Chapter 4]{AIK},  generalized Bernoulli numbers and Bernoulli polynomials are related.   
A closed form expression for sums of products of any number of generalized Bernoulli numbers is given in \cite{CE} and
 shortened recurrence relations for generalized Bernoulli numbers and polynomials attached to a primitive Dirichlet character is being studied in \cite{Agoh}.
\section{Basic properties}
We begin with  some basic expressions of generalized hypergeometric Bernoulli numbers.
  \begin{prop} 
Let $\chi={\bf 1}$ be the trivial character of conductor $1$.
For any $N\geq 1$ and $n\geq 0$, put $N(n):=$min$\{ N-1,n\}$.
Then we have
\[ B_{N,n,{\bf 1}}=\begin{cases}
\displaystyle{ \sum_{m=0}^{N(n)} \binom{n}{m} B_{N,n-m}} & \text{if}\ n\ne N,\\
\\
\displaystyle{1+\sum_{m=0}^{N-1} \binom{N}{m} B_{N,N-m} }& \text{if}\ n= N.
\end{cases}
\]
Especially, for $N=1$, we have
\[ B_{1,n,{\bf 1}}=\begin{cases}
B_{n}& \text{if}\ n\ne 1,\\
\\
1/2& \text{if}\ n= 1.
\end{cases}
\]
\end{prop}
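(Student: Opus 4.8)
The plan is to reduce the generalized numbers $B_{N,n,\mathbf{1}}$ to the ordinary hypergeometric Bernoulli numbers $B_{N,n}$ and then exploit the defining recurrence of the latter. First I would specialize the defining identity \eqref{def:gbn} to $\chi=\mathbf{1}$ with conductor $f=1$: the sum over $a$ collapses to the single term $a=1$ with $\mathbf{1}(1)=1$, so the generating function becomes
$$
\frac{t^N e^t/N!}{e^t-\sum_{k=0}^{N-1}t^k/k!}=\sum_{n=0}^\infty B_{N,n,\mathbf{1}}\frac{t^n}{n!}.
$$
Comparing with \eqref{def:hgb}, the left-hand side is exactly $e^t$ times the hypergeometric Bernoulli generating function, so $\sum_n B_{N,n,\mathbf{1}}\,t^n/n!=e^t\sum_n B_{N,n}\,t^n/n!$. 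Taking the Cauchy product with $e^t=\sum_k t^k/k!$ and reading off the coefficient of $t^n/n!$ yields the \emph{full} convolution
$$
B_{N,n,\mathbf{1}}=\sum_{m=0}^n\binom{n}{m}B_{N,n-m}=\sum_{j=0}^n\binom{n}{j}B_{N,j}.
$$

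Next I would isolate the arithmetic content that forces the truncation at $m=N(n)$. Multiplying \eqref{def:hgb} through by its denominator gives $t^N/N!=\bigl(\sum_{k\ge N}t^k/k!\bigr)\sum_n B_{N,n}\,t^n/n!$, and comparing coefficients of $t^n/n!$ produces the recurrence
$$
\sum_{j=0}^{n-N}\binom{n}{j}B_{N,j}=\begin{cases}1 & n=N,\\ 0 & n>N,\end{cases}
$$
valid for all $n\ge N$, the left side being a vacuous empty sum when $n<N$.

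To finish, I would split the full convolution according to whether $n<N$ or $n\ge N$. If $n<N$ then $N(n)=n$, no terms are dropped, and the first case follows directly. If $n\ge N$ then $N(n)=N-1$, and separating off the low-index block $\sum_{j=0}^{n-N}\binom{n}{j}B_{N,j}$ leaves exactly $\sum_{m=0}^{N-1}\binom{n}{m}B_{N,n-m}$; by the recurrence the removed block equals $1$ when $n=N$ and $0$ when $n>N$, which reproduces the $+1$ in the $n=N$ line and the clean truncated sum otherwise. The special case $N=1$ is then immediate: here $B_{1,j}=B_j$ and $N(n)=0$, so for $n\ne 1$ only the $m=0$ term survives, giving $B_n$, while for $n=1$ one picks up the extra contribution $1+B_1=1-\tfrac12=\tfrac12$.

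I expect the only genuinely substantive step to be the derivation and correct book-keeping of the recurrence for $B_{N,n}$ and its identification with the low-index block of the convolution; the passage from $B_{N,n,\mathbf{1}}$ to $e^t\sum_n B_{N,n}\,t^n/n!$ and the Cauchy product are routine formal-power-series manipulations, valid as an identity of formal power series (equivalently, analytically near $0$, within the disk $|t|<2\pi$ afforded by \eqref{def:gbn} at $f=1$).
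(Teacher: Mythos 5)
Your proof is correct. It differs from the paper's in organization, though the underlying algebra is the same identity rearranged. The paper splits the numerator as $t^Ne^t/N!=\tfrac{t^N}{N!}\bigl(e^t-\sum_{k=0}^{N-1}t^k/k!\bigr)+\tfrac{t^N}{N!}\sum_{k=0}^{N-1}t^k/k!$, so that after dividing by the denominator it directly obtains $\tfrac{t^N}{N!}$ plus the product of the \emph{truncated} exponential with $\sum_n B_{N,n}t^n/n!$; the truncated Cauchy product immediately produces the sum cut off at $m=N(n)$, and the lone $t^N/N!$ term supplies the $+1$ at $n=N$. You instead take the full Cauchy product with $e^t$, getting the complete convolution $\sum_{m=0}^{n}\binom{n}{m}B_{N,n-m}$, and then invoke as a separate lemma the recurrence $\sum_{j=0}^{n-N}\binom{n}{j}B_{N,j}=1$ or $0$ according as $n=N$ or $n>N$ (obtained by clearing the denominator in \eqref{def:hgb}) to discard the low-index block. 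Your route has the advantage of isolating that recurrence for $B_{N,n}$ as a reusable statement and making transparent exactly which terms of the full convolution vanish; the paper's route avoids stating the recurrence at all and keeps everything as one generating-function computation. Both are complete; the only bookkeeping worth double-checking in yours is the index correspondence $m\leftrightarrow n-j$ when matching the removed block to the recurrence, which you handle correctly.
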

\begin{proof}
By the definition of generalized hypergeometric Bernoulli numbers, 
we have
\begin{align*}
\sum_{n=0}^{\infty} B_{N,n,{\bf 1}}\frac{t^n}{n!} & =\frac{ t^Ne^{t}/N!}{ e^{t}-\sum_{n=0}^{N-1} \frac{t^n}{n!} } \\
\\
& =\frac{t^N/N! (e^t-\sum_{n=0}^{N-1} \frac{t^n}{n!} )+t^N/N! \sum_{n=0}^{N-1} \frac{t^n}{n!}
}{ e^{t}-\sum_{n=0}^{N-1} \frac{t^n}{n!} } \\
\\
& =\frac{t^N}{N!} +\sum_{n=0}^{N-1} \frac{t^n}{n!} \times \sum_{n=0}^{\infty}  B_{N,n} \frac{t^n}{n!} \\
\\
& =\frac{t^N}{N!}+\sum_{m=0}^{N-1} \sum_{n=0}^{\infty} B_{N,n} \frac{t^{m+n}}{m!n!}\\
\\
& =\frac{t^N}{N!}+\sum_{\ell=0}^{N-1} \sum_{m=0}^{\ell } B_{N,\ell-m} \frac{t^{\ell}}{m!(\ell-m)!}
+ \sum_{\ell=N}^{\infty} \sum_{m=0}^{N-1 } B_{N,\ell-m} \frac{t^{\ell}}{m!(\ell-m)!}\\
\\
& =\sum_{\ell=0}^{N-1} \sum_{m=0}^{\ell }\binom{\ell}{m} B_{N,\ell-m} \frac{t^{\ell}}{\ell !}
+ \left( 1+\sum_{m=0}^{N-1} \binom{N}{m} B_{N,N-m} \right) \frac{t^N}{N!} \\
& +
\sum_{\ell=N+1}^{\infty} \sum_{m=0}^{N-1 } \binom{\ell}{m} B_{N,\ell-m} \frac{t^{\ell}}{\ell !},
\end{align*}
and we get the assertion.
\end{proof} 
Howard \cite{Ho1,Ho2} defined the hypergeometric Bernoulli polynomials $B_{N,n}(x)\in \Q[x]$ by
\begin{equation} 
\frac{t^N e^{x t}/N!}{e^t-\sum_{n=0}^{N-1} \frac{t^n}{n!} }=\sum_{n=0}^{\infty} B_{N,n}(x)\frac{t^n}{n!}.
\label{def:hyperberpol} 
\end{equation}  
Note that $B_n(x):=B_{1,n}(x)$ be the classical Bernoulli polynomial and $B_{N,n}(0)=B_{N,n}$.
For classical case, the following relation between the generalized Bernoulli numbers and the Bernoulli polynomials is well-known \cite[Proposition 4.1]{W}:
\[
B_{n,\chi}=f^{n-1} \sum_{a=1}^f \chi (a) B_n\left(\frac{a}{f} \right).
\]
The following proposition is a generalization of the above classical case. 

\begin{prop}\label{prop:GHBN-HBP}
Let $\chi$ be a  Dirichlet character of conductor $f$.
For any $N\geq 1$ and $n\geq 0$, 
\[
B_{N,n,\chi}=f^{n-N} \sum_{a=1}^f \chi (a) B_{N,n}\left(\frac{a}{f} \right).
\]
\end{prop}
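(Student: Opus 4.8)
The plan is to manipulate the defining generating function \eqref{def:gbn} term by term and to recognize Howard's generating function \eqref{def:hyperberpol} for the hypergeometric Bernoulli polynomials. The crucial device is the substitution $s=ft$, which transforms the denominator $e^{ft}-\sum_{n=0}^{N-1}(ft)^n/n!$ appearing in \eqref{def:gbn} into $e^s-\sum_{n=0}^{N-1}s^n/n!$, exactly the shape occurring in \eqref{def:hyperberpol}.

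First I would fix an integer $a$ with $1\le a\le f$ and isolate the corresponding summand on the left-hand side of \eqref{def:gbn}. Writing $s=ft$ (so that $t=s/f$), the numerator $\chi(a)\,t^N e^{at}/N!$ becomes
\[
\chi(a)\,\frac{(s/f)^N e^{(a/f)s}}{N!}=\frac{\chi(a)}{f^N}\cdot\frac{s^N e^{(a/f)s}}{N!},
\]
while the denominator becomes precisely $e^s-\sum_{n=0}^{N-1}s^n/n!$. Hence each summand equals $\chi(a)/f^N$ times the series \eqref{def:hyperberpol} taken at $x=a/f$ with $s$ in place of $t$, so that
\[
\frac{\chi(a)\,t^N e^{at}/N!}{e^{ft}-\sum_{n=0}^{N-1}(ft)^n/n!}=\frac{\chi(a)}{f^N}\sum_{n=0}^{\infty}B_{N,n}\!\left(\frac{a}{f}\right)\frac{s^n}{n!}.
\]

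Next I would substitute back $s=ft$, use $s^n=f^n t^n$, and sum over $a$ from $1$ to $f$. This yields
\[
\sum_{n=0}^{\infty}B_{N,n,\chi}\frac{t^n}{n!}=\sum_{n=0}^{\infty}\left(f^{n-N}\sum_{a=1}^{f}\chi(a)B_{N,n}\!\left(\frac{a}{f}\right)\right)\frac{t^n}{n!},
\]
and comparing the coefficients of $t^n/n!$ on both sides gives the assertion.

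The computation is essentially routine; the only point demanding care is the bookkeeping of the powers of $f$. The substitution introduces the factor $f^{-N}$ from the numerator, and the back-substitution $s^n=f^n t^n$ then contributes the factor $f^n$, whose product is exactly the $f^{n-N}$ in the statement. The sole analytic subtlety is that the interchange of the finite sum over $a$ with the power series in $t$, and the rearrangement into a single series in $t^n/n!$, is justified by the convergence of all the series involved on the disk $|t|<2\pi/f$ furnished by \eqref{def:gbn}.
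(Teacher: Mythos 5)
Your proposal is correct and follows essentially the same route as the paper: both arguments amount to evaluating Howard's generating function \eqref{def:hyperberpol} at $x=a/f$ with $ft$ in place of $t$, multiplying by $\chi(a)$, summing over $a$, and comparing coefficients, with the same bookkeeping of the factors $f^{-N}$ and $f^{n}$. The only cosmetic difference is the direction of travel (you start from \eqref{def:gbn} and recognize \eqref{def:hyperberpol}, while the paper starts from \eqref{def:hyperberpol} and arrives at \eqref{def:gbn}).
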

\begin{proof}
By the definition (of  hypergeometric Bernoulli polynomial), we have,
\[ \sum_{n=0}^{\infty} B_{N,n}\left(\frac{a}{f}\right)\frac{(ft)^n}{n!}=\frac{(ft)^N e^{a/f \times ft}/N!}{e^{ft}-\sum_{n=0}^{N-1} \frac{(ft)^n}{n!} }.
\]
Now multiplying by $\chi (a)$ and taking a sum on $a$ from $1$ to $f$, we have
\begin{align*}
f^n\sum_{n=0}^{\infty} \sum_{a=1}^f \chi (a) B_{N,n} \left( \frac{a}{f} \right) \frac{t^n}{n!} & =
f^N \sum_{a=1}^f \frac{\chi (a) t^N e^{at}/N!}{e^{ft}-\sum_{n=0}^{N-1} \frac{(ft)^n}{n!} } \\
& \\
&=f^N\sum_{n=0}^{\infty} B_{N,n,\chi}\frac{t^n}{n!}.
\end{align*}
Therefore, 
\[
B_{N,n,\chi}=f^{n-N} \sum_{a=1}^f \chi (a) B_{N,n}\left(\frac{a}{f} \right)
\]
 for any $n\geq 0$.
\end{proof}

Consequently, we can express the generalized hypergeometric Bernoulli numbers in terms of hypergeometric Bernoulli numbers 

\begin{cor}  
For any integers $N\ge 1$ and $n\ge 0$,
\[
B_{N,n,\chi}=\sum_{a=1}^f\chi(a)\sum_{k=0}^n\binom{n}{k}B_{N,k}a^{n-k}f^{k-N}.
\] 
\label{cor10} 
\end{cor}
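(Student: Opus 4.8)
The plan is to derive this corollary directly from Proposition~\ref{prop:GHBN-HBP} by inserting the binomial (Taylor) expansion of the hypergeometric Bernoulli polynomial $B_{N,n}(x)$ about $x=0$. The starting point is the identity
\[
B_{N,n,\chi}=f^{n-N}\sum_{a=1}^f\chi(a)B_{N,n}\!\left(\frac{a}{f}\right),
\]
already established in Proposition~\ref{prop:GHBN-HBP}, so the only thing that remains is to expand $B_{N,n}(a/f)$ in terms of the numbers $B_{N,k}=B_{N,k}(0)$ and then reconcile the powers of $f$.

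First I would extract the needed polynomial identity from the defining generating function~\eqref{def:hyperberpol}. Since the factor $e^{xt}$ is the only place where $x$ enters, that generating function factors as
\[
\sum_{n=0}^{\infty} B_{N,n}(x)\frac{t^n}{n!}=e^{xt}\cdot\frac{t^N/N!}{e^t-\sum_{m=0}^{N-1}t^m/m!}=\left(\sum_{j=0}^\infty x^j\frac{t^j}{j!}\right)\left(\sum_{k=0}^\infty B_{N,k}\frac{t^k}{k!}\right).
\]
Carrying out the Cauchy product and comparing the coefficients of $t^n/n!$ on both sides yields the binomial relation
\[
B_{N,n}(x)=\sum_{k=0}^n\binom{n}{k}B_{N,k}\,x^{n-k},
\]
which is the exact analogue of the classical Bernoulli-polynomial expansion.

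Finally I would substitute $x=a/f$ into this expansion, giving $B_{N,n}(a/f)=\sum_{k=0}^n\binom{n}{k}B_{N,k}\,a^{n-k}f^{k-n}$, plug it back into the formula from Proposition~\ref{prop:GHBN-HBP}, and interchange the two finite sums. The only genuine bookkeeping step—and the one place to be slightly careful—is combining the external $f^{n-N}$ with the internal $f^{k-n}$, which collapses to $f^{k-N}$ and produces exactly
\[
B_{N,n,\chi}=\sum_{a=1}^f\chi(a)\sum_{k=0}^n\binom{n}{k}B_{N,k}\,a^{n-k}f^{k-N}.
\]
I do not anticipate any real obstacle here: the argument is a routine substitution, and the substance lies entirely in Proposition~\ref{prop:GHBN-HBP}, which has already done the analytic work of relating $B_{N,n,\chi}$ to the polynomials $B_{N,n}(a/f)$.
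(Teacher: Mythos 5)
Your proposal is correct and follows exactly the paper's own route: derive $B_{N,n}(x)=\sum_{k=0}^n\binom{n}{k}B_{N,k}x^{n-k}$ from the generating function \eqref{def:hyperberpol}, then substitute $x=a/f$ into Proposition~\ref{prop:GHBN-HBP} and combine the powers of $f$. No issues.
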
  
\begin{proof}  
Since 
\begin{align*}  
\sum_{n=0}^\infty B_{N,n}(x)\frac{t^n}{n!}&=\frac{t^N e^{x t}/N!}{e^t-\sum_{n=0}^{N-1} \frac{t^n}{n!} }\\
&=\left(\sum_{n=0}^\infty B_{N,n}\frac{t^n}{n!}\right)\left(\sum_{m=0}^\infty x^m\frac{t^m}{m!}\right)\\
&=\sum_{n=0}^\infty\sum_{k=0}^n\binom{n}{k}B_{N,k} x^{n-k}\frac{t^n}{n!}, 
\end{align*}  
comparing the coefficients on both sides, we get 
\[
B_{N,n}(x)=\sum_{k=0}^n\binom{n}{k}B_{N,k} x^{n-k}.  
\]
By Proposition \ref{prop:GHBN-HBP} with $x=a/f$, we obtain the desired result.  
\end{proof} 

\section{Fundamental properties of the generalized hypergeometric Bernoulli polynomials} 

We also define the {\it generalized hypergeometric Bernoulli polynomials} $B_{N,n,\chi}$
by
\begin{equation} 
\sum_{a=1}^f \frac{\chi (a) \ t^N e^{(x+a)t}/N!}{e^{ft}-\sum_{n=0}^{N-1} \frac{(ft)^n}{n!} } =
\sum_{n=0}^{\infty} B_{N,n,\chi}(x)\frac{t^n}{n!}\quad(|t|<2\pi/f)\,.
\label{def:gbp}
\end{equation}
When $N=1$ in (\ref{def:gbp}), $B_{n,\chi}(x):=B_{1,n,\chi}(x)$ is the classical generalized Bernoulli polynomial.  
When $\chi={\bf 1}$, $B_{N,n}(x):=B_{N,n,{\bf 1}}(x)$ is the hypergeometric Bernoulli polynomial in (\ref{def:hyperberpol}).

\begin{prop} 
Let $\chi={\bf 1}$ be the trivial character of conductor $1$.
For any $N\geq 1$ and $n\geq 0$, we have
$$ 
B_{N,n,{\bf 1}}(x)=\begin{cases} 
\displaystyle{ \sum_{i=0}^n\binom{n}{i}B_{N,n-i}}(x) & \text{if}\ 0\leq n\leq N-1,\\
\\
\displaystyle{\binom{n}{N}x^{n-N}+\sum_{i=0}^{N-1}\binom{n}{i}B_{N,n-i}}(x)& \text{if}\ n\geq N.
\end{cases}
$$ 
Especially, for $N=1$, we have
$$ 
B_{1,n,{\bf 1}}(x)=\begin{cases}
1& \text{if}\ n=0,\\ 
n x^{n-1}+B_{n}(x)& \text{if}\ n\geq 1. 
\end{cases}
$$ 
\end{prop}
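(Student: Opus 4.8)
The plan is to mimic the proof of the first Proposition, now carrying the indeterminate $x$ along. Setting $\chi=\mathbf{1}$ and $f=1$ in the defining relation (\ref{def:gbp}), the sum over $a$ collapses to the single term $a=1$, so
\[
\sum_{n=0}^{\infty} B_{N,n,\mathbf{1}}(x)\frac{t^n}{n!}=\frac{t^N e^{(x+1)t}/N!}{e^t-\sum_{n=0}^{N-1} t^n/n!}.
\]
Writing $D=e^t-\sum_{n=0}^{N-1} t^n/n!$ for the denominator, the key algebraic step is to split $e^{(x+1)t}=e^{xt}e^t=e^{xt}\bigl(D+\sum_{n=0}^{N-1} t^n/n!\bigr)$, which turns the right-hand side into
\[
\frac{t^N e^{xt}}{N!}+\frac{t^N e^{xt}/N!}{D}\sum_{n=0}^{N-1}\frac{t^n}{n!}.
\]

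First I would expand the first summand: since $\frac{t^N e^{xt}}{N!}=\sum_{m\ge 0}\frac{x^m}{N!\,m!}t^{N+m}$, its coefficient of $t^n/n!$ equals $\binom{n}{N}x^{n-N}$ for $n\ge N$ and vanishes for $n<N$. For the second summand I would invoke the definition (\ref{def:hyperberpol}) of the hypergeometric Bernoulli polynomials, namely $\frac{t^N e^{xt}/N!}{D}=\sum_{j\ge 0}B_{N,j}(x)t^j/j!$, and then form the Cauchy product with the finite sum $\sum_{i=0}^{N-1}t^i/i!$. The coefficient of $t^n/n!$ in that product is $\sum_{i=0}^{\min\{N-1,n\}}\binom{n}{i}B_{N,n-i}(x)$.

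Adding the two contributions and comparing coefficients of $t^n/n!$ then yields the result, once I split on whether $n\le N-1$ or $n\ge N$: in the former range the first summand is absent and $\min\{N-1,n\}=n$, giving $\sum_{i=0}^{n}\binom{n}{i}B_{N,n-i}(x)$; in the latter range the first summand contributes $\binom{n}{N}x^{n-N}$ and $\min\{N-1,n\}=N-1$, giving the stated formula. The specialization to $N=1$ is then immediate: with $B_{1,n}(x)=B_n(x)$, the $n=0$ case collapses to $B_0(x)=1$, while for $n\ge 1$ the formula reduces to $nx^{n-1}+B_n(x)$.

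I do not anticipate a genuine obstacle here; the only point requiring care is the bookkeeping of the truncation bound $\min\{N-1,n\}$ together with the case distinction at the boundary $n=N$, exactly as in the first Proposition. The argument is otherwise a routine generating-function manipulation that runs in complete parallel to the numerical case $x=0$.
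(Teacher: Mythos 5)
Your proposal is correct and follows essentially the same route as the paper's own proof: the same decomposition $e^{(x+1)t}=e^{xt}\bigl(D+\sum_{n=0}^{N-1}t^n/n!\bigr)$, the same identification of the two resulting pieces via (\ref{def:hyperberpol}), and the same case split at $n=N$. No substantive differences.
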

\begin{proof}  
By the definition in (\ref{def:gbp}), we have 
\begin{align*}  
&\sum_{n=0}^\infty B_{N,n,{\bf 1}}(x)\frac{t^n}{n!}=\frac{t^N e^{(x+1)t}/N!}{e^{t}-\sum_{n=0}^{N-1} \frac{t^n}{n!} }\\
&=\frac{t^N/N!\bigl(e^{t}-\sum_{n=0}^{N-1} \frac{t^n}{n!}\bigr)e^{x t}+t^N/N! e^{x t}\sum_{n=0}^{N-1} \frac{t^n}{n!}}{e^{t}-\sum_{n=0}^{N-1} \frac{t^n}{n!}}\\
&=\frac{t^N}{N!}e^{x t}+\left(\sum_{n=0}^{N-1} \frac{t^n}{n!}\right)\left(\sum_{n=0}^\infty B_{N,n}(x)\frac{t^n}{n!}\right)\\
&=\sum_{n=0}^\infty\frac{x^n}{N!n!}t^{N+n}+\sum_{n=0}^\infty\sum_{i=0}^{\min\{n,N-1\}}\binom{n}{i}B_{N,n-i}(x)\frac{t^n}{n!}\\
&=\sum_{n=N}^\infty\binom{n}{N}x^{n-N}\frac{t^n}{n!}+\sum_{n=0}^{N-1}\sum_{i=0}^n\binom{n}{i}B_{N,n-i}(x)\frac{t^n}{n!}\\
&\quad +\sum_{n=N}^\infty\sum_{i=0}^{N-1}\binom{n}{i}B_{N,n-i}(x)\frac{t^n}{n!}\,.  
\end{align*}
Comparing the coefficients on both sides, we get the result.  
\end{proof}

\begin{prop}
For $n\ge 0$,  
$$
B_{N,n,\chi}(x+y)=\sum_{k=0}^n\binom{n}{k}B_{N,k,\chi}(x)y^{n-k}\,. 
$$ 
\label{prop:xy}
\end{prop}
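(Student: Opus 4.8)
The plan is to work directly with the exponential generating function (\ref{def:gbp}) that defines $B_{N,n,\chi}(x)$, exploiting the fact that the second argument enters only through a single exponential factor. First I would substitute $x+y$ for $x$ in (\ref{def:gbp}), which gives
\[
\sum_{n=0}^\infty B_{N,n,\chi}(x+y)\frac{t^n}{n!}=\sum_{a=1}^f\frac{\chi(a)\,t^N e^{(x+y+a)t}/N!}{e^{ft}-\sum_{n=0}^{N-1}\frac{(ft)^n}{n!}}.
\]
The key observation is that $e^{(x+y+a)t}=e^{yt}e^{(x+a)t}$, and since the factor $e^{yt}$ is independent of the summation index $a$ (and of the denominator), it can be pulled outside the finite sum over $a$. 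What remains inside is exactly the series defining $B_{N,k,\chi}(x)$, so the right-hand side becomes
\[
e^{yt}\sum_{a=1}^f\frac{\chi(a)\,t^N e^{(x+a)t}/N!}{e^{ft}-\sum_{n=0}^{N-1}\frac{(ft)^n}{n!}}=e^{yt}\sum_{k=0}^\infty B_{N,k,\chi}(x)\frac{t^k}{k!}.
\]

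Next I would expand $e^{yt}=\sum_{m=0}^\infty y^m t^m/m!$ and form the Cauchy product of the two power series. Collecting the coefficient of $t^n$ yields
\[
\sum_{k=0}^n \frac{y^{n-k}}{(n-k)!}\cdot\frac{B_{N,k,\chi}(x)}{k!}=\frac{1}{n!}\sum_{k=0}^n\binom{n}{k}B_{N,k,\chi}(x)\,y^{n-k},
\]
and comparing this with the coefficient of $t^n/n!$ in the left-hand expansion gives the claimed identity.

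The argument is essentially routine once the factorization is in place, so I do not expect a genuine obstacle. The only structural point worth isolating is that neither the denominator $e^{ft}-\sum_{n=0}^{N-1}(ft)^n/n!$ nor the character weight $\chi(a)$ involves $y$; hence the entire $y$-dependence is concentrated in $e^{yt}$ and factors cleanly through the finite sum over $a$, which is precisely what reduces the statement to the classical binomial convolution. I would also remark that all manipulations take place in the common disk of convergence $|t|<2\pi/f$ supplied by (\ref{def:gbp}), where the series converge absolutely and may be freely rearranged.
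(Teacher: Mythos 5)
Your proof is correct and follows essentially the same route as the paper's: both factor the generating function as $e^{yt}$ times the series for $B_{N,k,\chi}(x)$ and then take the Cauchy product to extract the binomial convolution. You merely spell out in more detail why $e^{yt}$ pulls through the finite sum over $a$, which the paper asserts in one step.
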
 
\begin{proof}  
By the definition in (\ref{def:gbp}), we have 
\begin{align*}
\sum_{n=0}^\infty B_{N,n,\chi}(x+y)\frac{t^n}{n!}&=\left(\sum_{n=0}^\infty B_{N,n,\chi}(x)\frac{t^n}{n!}\right)e^{y t}\\
&=\left(\sum_{k=0}^\infty B_{N,k,\chi}(x)\frac{t^k}{k!}\right)\left(\sum_{l=0}^\infty\frac{y^l t^l}{l!}\right)\\
&=\sum_{n=0}^\infty\sum_{k=0}^n\binom{n}{k}B_{N,k,\chi}(x)y^{n-k}\frac{t^n}{n!}\,. 
\end{align*}
Comparing the coefficients on both sides, we get the result. 
\end{proof} 

When $x=0$ and $y$ is replaced by $x$ in Proposition \ref{prop:xy}, we have a relation between generalized hypergeometric Bernoulli polynomials and numbers.  

\begin{cor}
For $n\ge 0$,  
$$
B_{N,n,\chi}(x)=\sum_{k=0}^n\binom{n}{k}B_{N,k,\chi}x^{n-k}\,. 
$$ 
\label{prop:xy}
\end{cor}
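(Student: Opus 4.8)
The plan is to derive this directly from the addition formula in Proposition \ref{prop:xy}, which I may invoke since it precedes the corollary. The only genuine input needed beyond that formula is the identity $B_{N,k,\chi}(0)=B_{N,k,\chi}$ relating the polynomials to the numbers, and once that is in hand the corollary is a pure specialization.

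First I would establish that identity by comparing the two defining generating functions. Setting $x=0$ in the definition (\ref{def:gbp}) of the polynomials collapses the factor $e^{(x+a)t}$ to $e^{at}$, so the left-hand side of (\ref{def:gbp}) becomes exactly the left-hand side of the definition (\ref{def:gbn}) of the numbers. Matching the Taylor coefficients of $t^k/k!$ on both sides then yields $B_{N,k,\chi}(0)=B_{N,k,\chi}$ for every $k\ge 0$.

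Next I would apply Proposition \ref{prop:xy} with its $x$ specialized to $0$ and its $y$ renamed to $x$, which gives
\[
B_{N,n,\chi}(0+x)=\sum_{k=0}^n\binom{n}{k}B_{N,k,\chi}(0)\,x^{n-k}.
\]
Substituting the identity $B_{N,k,\chi}(0)=B_{N,k,\chi}$ from the previous step into the right-hand side, and noting that $0+x=x$ on the left, produces the claimed expression.

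There is no real obstacle here: the statement is a formal specialization of the already-proved addition theorem, and the sole nontrivial observation—that the constant term of the polynomial reproduces the corresponding number—is immediate from the generating-function definitions. Care should be taken only to confirm that (\ref{def:gbp}) and (\ref{def:gbn}) agree termwise once $x=0$, which they visibly do.
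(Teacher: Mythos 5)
Your proof is correct and follows essentially the same route as the paper, which also obtains the corollary by setting $x=0$ and renaming $y$ to $x$ in Proposition \ref{prop:xy}. The only difference is that you explicitly verify $B_{N,k,\chi}(0)=B_{N,k,\chi}$ from the definitions (\ref{def:gbp}) and (\ref{def:gbn}), a step the paper leaves implicit.
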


There is another relation of generalized hypergeometric Bernoulli polynomials with their numbers in terms of Stirling numbers of the second kind $\sts{n}{k}$, whose generating function is given by 
$$
\frac{(e^t-1)^\ell}{\ell!}=\sum_{n=\ell}^\infty\sts{n}{\ell}\frac{t^n}{n!}\,.
$$   

\begin{prop}  
For $n\ge 0$,  
$$
B_{N,n,\chi}(x)=\sum_{k=0}^n\sum_{l=0}^k\binom{n}{k}\sts{k}{l}(x)_l B_{N,n-k,\chi}\,, 
$$ 
where $(x)_l=x(x-1)\cdots(x-l+1)$ ($l\ge 1$) denotes the falling factorial with $(x)_0=1$. 
\label{prop:st2}
\end{prop}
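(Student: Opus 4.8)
The plan is to factor the generating function of the polynomials as $e^{xt}$ times the generating function of the numbers, and then to expand $e^{xt}$ in the basis of powers $(e^t-1)^l$, which is precisely where the Stirling numbers of the second kind enter. First I would observe that in the defining relation (\ref{def:gbp}) we may write $e^{(x+a)t}=e^{xt}e^{at}$ and pull the factor $e^{xt}$ (which is independent of $a$) out of the sum over $a$. Comparing with the definition (\ref{def:gbn}) of the numbers $B_{N,m,\chi}$, this gives
\[
\sum_{n=0}^\infty B_{N,n,\chi}(x)\frac{t^n}{n!}=e^{xt}\sum_{m=0}^\infty B_{N,m,\chi}\frac{t^m}{m!}\,.
\]

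The key step is the identity
\[
e^{xt}=\sum_{l=0}^\infty (x)_l\frac{(e^t-1)^l}{l!}\,,
\]
which follows from the binomial series $(1+u)^x=\sum_{l\ge 0}\frac{(x)_l}{l!}u^l$ applied with $u=e^t-1$, since $1+(e^t-1)=e^t$ and hence $(e^t)^x=e^{xt}$. Substituting the generating function of the Stirling numbers, $\frac{(e^t-1)^l}{l!}=\sum_{k\ge l}\sts{k}{l}\frac{t^k}{k!}$, and interchanging the order of summation then yields
\[
e^{xt}=\sum_{k=0}^\infty\left(\sum_{l=0}^k\sts{k}{l}(x)_l\right)\frac{t^k}{k!}\,.
\]

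Finally I would multiply this series by $\sum_{m=0}^\infty B_{N,m,\chi}\frac{t^m}{m!}$ and extract the coefficient of $t^n/n!$ through the usual binomial (Cauchy) convolution, obtaining
\[
\sum_{k=0}^n\binom{n}{k}\left(\sum_{l=0}^k\sts{k}{l}(x)_l\right)B_{N,n-k,\chi}=\sum_{k=0}^n\sum_{l=0}^k\binom{n}{k}\sts{k}{l}(x)_l B_{N,n-k,\chi}\,,
\]
and comparing with the coefficient on the left-hand side completes the argument. The only nontrivial ingredient is the expansion of $e^{xt}$ in terms of $(e^t-1)^l$; the remaining work is routine rearrangement of a double sum, so I expect the main obstacle to be purely bookkeeping, namely keeping the index ranges of $k$ and $l$ consistent when interchanging summations and when applying the convolution.
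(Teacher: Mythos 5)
Your proposal is correct and follows essentially the same route as the paper's own proof: both write the polynomial generating function as $e^{xt}=\bigl((e^t-1)+1\bigr)^x$ times the generating function of the numbers $B_{N,m,\chi}$, expand via the binomial series $\binom{x}{l}l!=(x)_l$ together with the Stirling-number generating function for $(e^t-1)^l/l!$, and finish with the Cauchy convolution. No gaps.
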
  
\begin{proof}  
\begin{align*} 
\sum_{n=0}^\infty B_{N,n,\chi}(x)\frac{t^n}{n!}&=\left(\sum_{n=0}^\infty B_{N,n,\chi}\frac{t^n}{n!}\right)\bigl((e^t-1)+1\bigr)^x\\
&=\left(\sum_{n=0}^\infty B_{N,n,\chi}\frac{t^n}{n!}\right)\left(\sum_{l=0}^\infty\binom{x}{l}l!\sum_{n=l}^\infty\sts{n}{l}\frac{t^n}{n!}\right)\\
&=\left(\sum_{n=0}^\infty B_{N,n,\chi}\frac{t^n}{n!}\right)\left(\sum_{n=0}^\infty\sum_{l=0}^n\sts{n}{l}(x)_l\frac{t^n}{n!}\right)\\
&=\sum_{n=0}^\infty\sum_{k=0}^n\sum_{l=0}^k\binom{n}{k}\sts{k}{l}(x)_l B_{N,n-k,\chi}\frac{t^n}{n!}\,. 
\end{align*} 
Comparing the coefficients on both sides, we get the result. 
\end{proof}  

We can see that the generalized hypergeometric Bernoulli numbers are Appell.   

\begin{prop}  
For $n\ge 1$,  
$$
\frac{d}{d x}B_{N,n,\chi}(x)=n B_{N,n-1,\chi}(x)\,. 
$$ 
\label{appell}  
\end{prop}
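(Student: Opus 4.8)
The plan is to differentiate the defining generating function (\ref{def:gbp}) with respect to $x$ and then match coefficients of $t^n/n!$. The key observation is that on the left-hand side of (\ref{def:gbp}) the variable $x$ appears only inside the exponential factor $e^{(x+a)t}$, and $\frac{\partial}{\partial x}e^{(x+a)t}=t\,e^{(x+a)t}$; hence differentiating the entire left-hand side in $x$ simply multiplies it by $t$. Since each $B_{N,n,\chi}(x)$ is a polynomial in $x$ (the defining sum over $a$ is finite and each hypergeometric Bernoulli polynomial lies in $\Q[x]$), the right-hand side may be differentiated term by term, giving
\begin{align*}
\sum_{n=0}^\infty\frac{d}{dx}B_{N,n,\chi}(x)\frac{t^n}{n!}
&=\frac{\partial}{\partial x}\sum_{a=1}^f\frac{\chi(a)\,t^N e^{(x+a)t}/N!}{e^{ft}-\sum_{m=0}^{N-1}(ft)^m/m!}\\
&=t\sum_{n=0}^\infty B_{N,n,\chi}(x)\frac{t^n}{n!}\,.
\end{align*}

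Next I would reindex the right-hand side. Writing $t\cdot t^n/n!=t^{n+1}/n!$ and setting $\ell=n+1$ turns that series into $\sum_{\ell=1}^\infty \ell\,B_{N,\ell-1,\chi}(x)\,t^\ell/\ell!$. Comparing the coefficient of $t^n/n!$ on the two sides then yields $\frac{d}{dx}B_{N,n,\chi}(x)=n\,B_{N,n-1,\chi}(x)$ for every $n\ge 1$ (for $n=0$ the left-hand term is the derivative of a constant, consistently $0$), which is exactly the claimed Appell relation.

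The only point requiring a word of care is the legitimacy of interchanging $\frac{d}{dx}$ with the infinite summation in $t$; this is the main, though very mild, obstacle. It is handled by noting that (\ref{def:gbp}) is an identity of formal power series in $t$ whose coefficients are polynomials in $x$, so that differentiation in $x$ acts coefficientwise with no convergence issue, or equivalently that the series converges locally uniformly in $(t,x)$ for $|t|<2\pi/f$. As an independent cross-check one may instead differentiate the explicit expansion $B_{N,n,\chi}(x)=\sum_{k=0}^n\binom{n}{k}B_{N,k,\chi}x^{n-k}$ from the preceding corollary and apply the identity $(n-k)\binom{n}{k}=n\binom{n-1}{k}$ to recover the same formula.
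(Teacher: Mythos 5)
Your proof is correct and follows essentially the same route as the paper's: differentiate the generating function (\ref{def:gbp}) in $x$, observe that this multiplies the series by $t$, reindex, and compare coefficients of $t^n/n!$. The extra remarks on interchanging $d/dx$ with the sum and the alternative check via $B_{N,n,\chi}(x)=\sum_{k=0}^n\binom{n}{k}B_{N,k,\chi}x^{n-k}$ are sound but not needed beyond what the paper does.
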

\begin{proof}  
From the definition in (\ref{def:gbp}), 
\begin{align*} 
\frac{d}{d x}\sum_{n=0}^\infty B_{N,n,\chi}(x)\frac{t^n}{n!}&=t\sum_{n=0}^\infty B_{N,n,\chi}(x)\frac{t^n}{n!}\\
&=\sum_{n=0}^\infty(n+1)B_{N,n,\chi}(x)\frac{t^{n+1}}{(n+1)!}\\
&=\sum_{n=1}^\infty n B_{N,n-1,\chi}(x)\frac{t^{n}}{n!}\,. 
\end{align*}
Comparing the coefficients on both sides, we get the result. 
\end{proof}

\section{Recurrence relations}  

From now on, for convenience sake let us put
$$
S_n:=\sum_{a=1}^f\chi(a)a^n\quad\hbox{and}\quad S_n(x):=\sum_{a=1}^f\chi(a)(x+a)^n\,.
$$ 
The following are recurrence relations of the generalized hypergeometric Bernoulli numbers and polynomials with Dirichlet character.   

\begin{prop} 
For $n\ge 1$,  
\begin{align}  
\binom{N+n}{n}^{-1}\sum_{i=0}^n\binom{N+n}{i}B_{N,i,\chi}f^{N+n-i}=S_n\,, 
\label{rec1}\\
\binom{N+n}{n}^{-1}\sum_{i=0}^n\binom{N+n}{i}B_{N,i,\chi}(x)f^{N+n-i}=S_n(x)\,. 
\label{rec2}
\end{align}
\label{prp:brec}
\end{prop}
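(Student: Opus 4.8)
The plan is to prove both identities at once by clearing the denominator in the defining generating functions and comparing the coefficient of $t^{N+n}$ on each side. I focus on (\ref{rec1}); the identity (\ref{rec2}) will follow from the identical argument with $e^{at}$ replaced by $e^{(x+a)t}$ and $S_n$ by $S_n(x)$.

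First I would rewrite (\ref{def:gbn}) by moving the denominator across. Writing $D(t):=e^{ft}-\sum_{n=0}^{N-1}(ft)^n/n!$, one has $D(t)=\sum_{m=0}^\infty f^{N+m}t^{N+m}/(N+m)!$, so its lowest-order term has degree $N$. Multiplying (\ref{def:gbn}) through by $N!\,D(t)$ and using $\sum_{a=1}^f\chi(a)e^{at}=\sum_{n=0}^\infty S_n t^n/n!$, the definition becomes the power-series identity
\[
\left(\sum_{i=0}^\infty B_{N,i,\chi}\frac{t^i}{i!}\right)\left(\sum_{m=0}^\infty\frac{f^{N+m}t^{N+m}}{(N+m)!}\right)=\frac{t^N}{N!}\sum_{n=0}^\infty S_n\frac{t^n}{n!}\,.
\]

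The key step is then to extract the coefficient of $t^{N+n}$ from both sides. On the right this coefficient is $S_n/(N!\,n!)$. On the left a term of degree $N+n$ arises from pairing $B_{N,i,\chi}t^i/i!$ with $f^{N+m}t^{N+m}/(N+m)!$ exactly when $i+m=n$; letting $i$ run from $0$ to $n$ with $m=n-i$, the coefficient is $\sum_{i=0}^n B_{N,i,\chi}f^{N+n-i}/\bigl(i!(N+n-i)!\bigr)$. Equating the two expressions and multiplying through by $(N+n)!$ converts each factorial quotient into a binomial coefficient, giving $\sum_{i=0}^n\binom{N+n}{i}B_{N,i,\chi}f^{N+n-i}$ on the left and $\binom{N+n}{n}S_n$ on the right; dividing by $\binom{N+n}{n}$ yields (\ref{rec1}).

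I do not anticipate a genuine obstacle here, as the argument is merely a Cauchy-product comparison of coefficients. The only point that needs care is the index bookkeeping in the convolution: since $D(t)$ begins at degree $N$, it is the $t^{N+n}$ coefficients (not the $t^n$ coefficients) that must be matched, and this is precisely what produces the shift to $N+n$ in the binomials, together with the identification $(N+n)!/(N!\,n!)=\binom{N+n}{n}$ on the right. For (\ref{rec2}) the same computation applies verbatim once one notes $\sum_{a=1}^f\chi(a)e^{(x+a)t}=\sum_{n=0}^\infty S_n(x)t^n/n!$, which replaces $S_n$ by $S_n(x)$ throughout.
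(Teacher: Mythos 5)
Your argument is correct and is essentially the paper's own proof: both clear the denominator in the defining generating function, expand $e^{ft}-\sum_{n=0}^{N-1}(ft)^n/n!$ as $\sum_{m\ge 0}f^{N+m}t^{N+m}/(N+m)!$, and compare coefficients of $t^{N+n}$ (the paper works with the polynomial version (\ref{rec2}) and specializes to $x=0$, while you do the reverse, which is immaterial). The only nit is the phrase ``multiplying through by $N!\,D(t)$'': your displayed identity corresponds to multiplying by $D(t)$ alone, and it is that identity your coefficient extraction correctly uses.
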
 

\noindent 
{\it Remark.}  
When $N=1$ in Proposition \ref{prp:brec}, we have 
\begin{align*}  
\sum_{i=0}^{n-1}\binom{n}{i}\frac{B_{i+1,\chi}}{i+1}f^{n-i}+\frac{B_{0,\chi}}{n+1}f^{n+1}=S_n\,,\\
\sum_{i=0}^{n-1}\binom{n}{i}\frac{B_{i+1,\chi}(x)}{i+1}f^{n-i}+\frac{B_{0,\chi}(x)}{n+1}f^{n+1}=S_n(x)\,. 
\end{align*}
which are \cite[(1.12),(1.13)]{Agoh}. When $N=f=1$ in Proposition \ref{prp:brec}, we have the well-known basic recurrence relations 
\begin{align*}  
\sum_{i=0}^{n-1}\binom{n}{i}\frac{B_{i+1}}{i+1}+\frac{1}{n+1}=0\,;\\
\sum_{i=0}^{n-1}\binom{n}{i}\frac{B_{i+1}(x)}{i+1}+\frac{1}{n+1}=x^n\,. 
\end{align*}
When $\chi=\mathbf 1$ is the trivial character of conductor $1$, the relation (\ref{rec1}) is reduced to 
$$
\sum_{i=0}^n\binom{N+n}{i}B_{N,i,{\mathbf 1}}=0\,. 
$$ 
which is \cite[Proposition 1 (3)]{AK1}. 

\begin{proof}[Proof of Proposition \ref{prp:brec}.]  
We shall prove the relation (\ref{rec2}).  The relation (\ref{rec1}) is the special case when $x=0$.  From the definition in (\ref{def:gbp}), 
$$
\sum_{a=1}^f\chi(a)\frac{t^N}{N!}e^{(x+a)t}=\left(e^{ft}-\sum_{n=0}^{N-1} \frac{(ft)^n}{n!}\right)\left(
\sum_{n=0}^{\infty} B_{N,n,\chi}(x)\frac{t^n}{n!}\right)\,. 
$$ 
The right-hand side is equal to 
$$   
\left(\sum_{n=0}^\infty\frac{(ft)^{N+n}}{(N+n)!}\right)\left(
\sum_{n=0}^{\infty} B_{N,n,\chi}(x)\frac{t^n}{n!}\right)
=\sum_{n=0}^\infty\sum_{i=0}^n\frac{B_{N,i,\chi}(x)}{i!}\frac{f^{N+n-i}}{(N+n-i)!}t^{N+n}\,. 
$$   
The left-hand side is equal to 
$$   
\sum_{n=0}^\infty\sum_{a=1}^f\chi(a)\frac{t^N}{N!}(x+a)^n\frac{t^n}{n!}
=\sum_{n=0}^\infty\sum_{a=1}^f\chi(a)(x+a)^n\frac{t^{N+n}}{N!n!}\,. 
$$ 
Comparing the coefficients on both sides, we get 
$$
\sum_{i=0}^n\frac{B_{N,i,\chi}(x)}{i!}\frac{f^{N+n-i}}{(N+n-i)!}=\sum_{a=1}^f\frac{\chi(a)(x+a)^n}{N!n!}\,, 
$$ 
yielding the relation (\ref{rec2}).  
\end{proof}

\section{Expressions}

We give expressions of the generalized hypergeometric Bernoulli polynomials and numbers.  

\begin{theorem}  
For $n\ge 1$,  
\begin{align}  
B_{N,n,\chi}&=\sum_{k=0}^n\frac{k!}{f^{N-k}}\binom{n}{k}\sum_{r=0}^k T_r(k) S_{n-k}\,, 
\label{th:exp-n}\\ 
B_{N,n,\chi}(x)&=\sum_{k=0}^n\frac{k!}{f^{N-k}}\binom{n}{k}\sum_{r=0}^k T_r(k) S_{n-k}(x)\,. 
\label{th:exp-p}
\end{align}
where 
$$
T_r(k):=(-N!)^r\sum_{i_1+\cdots+i_r=k\atop i_1,\dots,i_r\ge 1}\frac{1}{(N+i_1)!\cdots(N+i_r)!}
$$ 
with $T_0(0)=1$ and $T_0(k)=0$ ($k\ge 1$).  
\label{th:express}
\end{theorem}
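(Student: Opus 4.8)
The plan is to isolate the character-free factor $t^N/N!$ divided by the denominator, expand it as a power series whose coefficients are the sums $\sum_{r=0}^k T_r(k)$, and then multiply by the character sum $\sum_{a=1}^f\chi(a)e^{(x+a)t}$. I will prove the polynomial identity (\ref{th:exp-p}); identity (\ref{th:exp-n}) then follows by setting $x=0$, since $S_{n-k}(0)=S_{n-k}$.

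First I would rewrite the denominator. Since $e^{ft}-\sum_{n=0}^{N-1}(ft)^n/n!=\sum_{m\ge N}(ft)^m/m!=(ft)^N\sum_{m\ge 0}(ft)^m/(N+m)!$, the numerator factor $t^N/N!$ cancels against $(ft)^N$, leaving
$$
\frac{t^N/N!}{e^{ft}-\sum_{n=0}^{N-1}(ft)^n/n!}=\frac{1}{f^N}\cdot\frac{1}{N!\,D(t)},\qquad D(t):=\sum_{m\ge 0}\frac{(ft)^m}{(N+m)!}\,.
$$
The series $N!\,D(t)$ has constant term $1$, so I would write $N!\,D(t)=1-u(t)$ with $u(t)=-N!\sum_{m\ge 1}(ft)^m/(N+m)!$, a power series without constant term (convergent for $|t|<2\pi/f$, and in any case manipulable formally).

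Next I would expand $1/(N!\,D(t))=\sum_{r\ge 0}u(t)^r$ as a geometric series and compute $u(t)^r$ by the multinomial rule. Collecting the monomials of total degree $k=i_1+\cdots+i_r$ gives exactly $u(t)^r=\sum_{k\ge r}T_r(k)(ft)^k$, matching the definition of $T_r(k)$ in the statement. Since a composition of $k$ into $r$ positive parts forces $r\le k$, the coefficient $T_r(k)$ vanishes for $r>k$; hence summing over $r$ produces the finite inner sum and yields $1/(N!\,D(t))=\sum_{k\ge 0}(ft)^k\sum_{r=0}^k T_r(k)$, so that
$$
\frac{t^N/N!}{e^{ft}-\sum_{n=0}^{N-1}(ft)^n/n!}=\sum_{k\ge 0}f^{k-N}t^k\sum_{r=0}^k T_r(k)\,.
$$

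Finally I would multiply this series by $\sum_{a=1}^f\chi(a)e^{(x+a)t}=\sum_{m\ge 0}S_m(x)\,t^m/m!$, which reconstitutes the generating function (\ref{def:gbp}) for $B_{N,n,\chi}(x)$. Extracting the coefficient of $t^n$ (so that $k+m=n$) and multiplying through by $n!$, while using $n!/(n-k)!=k!\binom{n}{k}$, gives precisely (\ref{th:exp-p}). I expect the only delicate point to be the interchange and regrouping of the double series in $r$ and $k$; this is exactly where the vanishing $T_r(k)=0$ for $r>k$ does the work, turning the geometric sum over $r$ into the honest finite sum $\sum_{r=0}^k$ and justifying the rearrangement, after which the remaining factorial bookkeeping is routine.
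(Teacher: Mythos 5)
Your proposal is correct and follows essentially the same route as the paper's own proof: factor the generating function as the character sum $\sum_a\chi(a)e^{(x+a)t}$ times the inverse of $\sum_{m\ge0}N!f^mt^m/(N+m)!$, expand that inverse as a geometric series in $-N!\sum_{m\ge1}f^mt^m/(N+m)!$, apply the multinomial expansion to identify the coefficients $T_r(k)$, and finish with a Cauchy product and the identity $n!/(n-k)!=k!\binom{n}{k}$. Your explicit naming of $D(t)$ and $u(t)$ and your remark that $T_r(k)=0$ for $r>k$ (so the sum over $r$ truncates) only make explicit what the paper does implicitly; the deduction of (\ref{th:exp-n}) by setting $x=0$ also matches.
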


\noindent 
{\it Remark.}  
When $\chi=\mathbf 1$ is the trivial character of conductor $1$, expression (\ref{th:exp-n})  reduces to 
$$
B_{N,n,{\mathbf 1}}=n!\sum_{r=1}^n(-N!)^r\sum_{i_1+\cdots+i_r=n\atop i_1,\dots,i_r\ge 1}\frac{1}{(N+i_1)!\cdots(N+i_r)!}\,,
$$ 
which is \cite[Proposition 1 (4)]{AK1}. 

\begin{proof}[Proof of Theorem \ref{th:express}.]  
We shall prove  (\ref{th:exp-p}) as  (\ref{th:exp-n}) can be easily deduced.  
From the definition in (\ref{def:gbp}), 
\begin{align*}  
&\sum_{n=0}^\infty B_{N,n,\chi}(x)\frac{t^n}{n!}\\
&=\left(\sum_{a=1}^f\chi(a)\sum_{n=0}^\infty(x+a)^n\frac{t^n}{n!}\right)\frac{1}{f^N}\left(\sum_{n=0}^\infty\frac{N!f^n}{(N+n)!}t^n\right)^{-1}\\
&=\frac{1}{f^N}\left(\sum_{n=0}^\infty S_n(x)\frac{t^n}{n!}\right)\sum_{r=0}^\infty\left(-\sum_{n=1}^\infty\frac{N!f^n}{(N+n)!}t^n\right)^r\\ 
&=\frac{1}{f^N}\left(\sum_{n=0}^\infty S_n(x)\frac{t^n}{n!}\right)\sum_{r=0}^\infty(-1)^r\sum_{n=r}^\infty\sum_{i_1+\cdots+i_r=n\atop i_1,\dots,i_r\ge 1}\frac{(N!)^r f^n}{(N+i_1)!\cdots(N+i_r)!}t^n\\
&=\frac{1}{f^N}\left(\sum_{n=0}^\infty S_n(x)\frac{t^n}{n!}\right)\sum_{n=0}^\infty\sum_{r=0}^n(-N!)^r\sum_{i_1+\cdots+i_r=n\atop i_1,\dots,i_r\ge 1}\frac{n! f^n}{(N+i_1)!\cdots(N+i_r)!}\frac{t^n}{n!}\\
&=\frac{1}{f^N}\sum_{n=0}^\infty\sum_{k=0}^n\binom{n}{k}S_{n-k}(x)\sum_{r=0}^k(-N!)^r\sum_{i_1+\cdots+i_r=k\atop i_1,\dots,i_r\ge 1}\frac{k! f^k}{(N+i_1)!\cdots(N+i_r)!}\frac{t^n}{n!}\,.
\end{align*}
Comparing the coefficients on both sides, we get (\ref{th:exp-p}).  Note that the summation is empty when $k\ge 1$ and $r=0$.  
\end{proof} 

There are alternative expressions with additional binomial coefficients. 

\begin{theorem}  
For $n\ge 1$,  
\begin{align}  
B_{N,n,\chi}&=\sum_{k=0}^n\frac{k!}{f^{N-k}}\binom{n}{k}\sum_{r=0}^k\binom{k+1}{r+1}\widetilde{T}_r(k) S_{n-k}\,, 
\label{th:exp-n-2}\\ 
B_{N,n,\chi}(x)&=\sum_{k=0}^n\frac{k!}{f^{N-k}}\binom{n}{k}\sum_{r=0}^k\binom{k+1}{r+1}\widetilde{T}_r(k) S_{n-k}(x)\,. 
\label{th:exp-p-2}
\end{align}
where 
$$
\widetilde{T}_r(k):=(-N!)^r\sum_{i_1+\cdots+i_r=k\atop i_1,\dots,i_r\ge 0}\frac{1}{(N+i_1)!\cdots(N+i_r)!}
$$ 
with $\widetilde{T}_0(0)=1$ and $\widetilde{T}_0(k)=0$ ($k\ge 1$).  
\label{th:express-2}
\end{theorem}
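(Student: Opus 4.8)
The plan is to deduce Theorem~\ref{th:express-2} from Theorem~\ref{th:express} rather than re-expanding the generating function from scratch. Both theorems express the same number $B_{N,n,\chi}$ (and polynomial $B_{N,n,\chi}(x)$) in the shape $\sum_{k=0}^n\frac{k!}{f^{N-k}}\binom{n}{k}\,C_k\,S_{n-k}$, the only difference being the inner coefficient $C_k$: in \eqref{th:exp-n} it is $\sum_{r=0}^k T_r(k)$, whereas in \eqref{th:exp-n-2} it is $\sum_{r=0}^k\binom{k+1}{r+1}\widetilde T_r(k)$. Since $S_{n-k}$ is independent of $r$, it therefore suffices to prove the single scalar identity
\[
\sum_{r=0}^k T_r(k)=\sum_{r=0}^k\binom{k+1}{r+1}\widetilde T_r(k)\qquad(0\le k\le n),
\]
after which a termwise substitution gives both \eqref{th:exp-n-2} and \eqref{th:exp-p-2} at once.

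First I would relate $\widetilde T_r(k)$ to the $T_j(k)$. In the defining sum for $\widetilde T_r(k)$ the parts satisfy $i_j\ge 0$, so I classify the compositions $i_1+\cdots+i_r=k$ by the number $s$ of parts equal to $0$: choosing which $s$ of the $r$ slots vanish contributes $\binom{r}{s}$, each vanishing part contributes a factor $1/N!$, and the remaining $r-s$ parts are $\ge 1$ and sum to $k$. Tracking the powers of $-N!$ this yields the binomial-transform relation
\[
\widetilde T_r(k)=\sum_{s=0}^r\binom{r}{s}(-1)^s\,T_{r-s}(k).
\]
Substituting this into the right-hand side above and reindexing by $m=r-s$, the coefficient of $T_m(k)$ becomes $\sum_{r=m}^k(-1)^{r-m}\binom{r}{m}\binom{k+1}{r+1}$, so the whole identity collapses to showing that this coefficient equals $1$ for every $0\le m\le k$.

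The crux is therefore the purely combinatorial evaluation
\[
\sum_{r=m}^k(-1)^{r-m}\binom{r}{m}\binom{k+1}{r+1}=1\qquad(0\le m\le k),
\]
which I would establish by coefficient extraction. Writing $\binom{r}{m}=[z^m](1+z)^r$ and extending the range to $r=0$ (the terms with $r<m$ vanish), the sum equals $(-1)^{-m}[z^m]\sum_{r=0}^k\binom{k+1}{r+1}\bigl(-(1+z)\bigr)^r$; summing the geometric-type series via $\sum_{r=0}^k\binom{k+1}{r+1}w^r=\bigl((1+w)^{k+1}-1\bigr)/w$ at $w=-(1+z)$ and using $1+w=-z$ reduces it to $(-1)^{-m}[z^m]\bigl(1-(-1)^{k+1}z^{k+1}\bigr)/(1+z)$. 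Since $m\le k$ the term carrying $z^{k+1}$ contributes nothing, and $[z^m](1+z)^{-1}=(-1)^m$, leaving exactly $1$. The main obstacle is packaging this identity cleanly; the coefficient-extraction route above sidesteps any delicate interchange of infinite sums, and once it is in hand the reduction in the first two paragraphs is routine. As a sanity check, $k=0$ forces $m=0$ and both sides of the scalar identity equal $1$, matching $T_0(0)=\widetilde T_0(0)=1$.
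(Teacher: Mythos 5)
Your argument is correct, but it takes a genuinely different route from the paper's. The paper proves Theorem~\ref{th:express-2} from scratch: with $w=\sum_{n\ge1}N!f^nt^n/(N+n)!$ it computes $B_{N,n,\chi}(x)$ as the $n$-th derivative at $t=0$ of $f^{-N}(1+w)^{-1}\sum_a\chi(a)e^{(x+a)t}$, expands $(1+w)^{-1}=\sum_{l}(-w)^l$ and then expands $(-w)^l$ binomially (using that $-w=1-\sum_{m\ge0}N!f^mt^m/(N+m)!$), which is precisely what introduces the compositions with parts $i_j\ge 0$ together with a factor $\binom{l}{r}$; the hockey-stick summation $\sum_{l=r}^{k}\binom{l}{r}=\binom{k+1}{r+1}$ then produces the stated coefficient. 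You instead deduce Theorem~\ref{th:express-2} as a corollary of Theorem~\ref{th:express} by proving the scalar identity $\sum_{r=0}^kT_r(k)=\sum_{r=0}^k\binom{k+1}{r+1}\widetilde T_r(k)$, reducing it via the binomial transform $\widetilde T_r(k)=\sum_{s=0}^r\binom{r}{s}(-1)^sT_{r-s}(k)$ (obtained by sorting nonnegative compositions according to their zero parts, with the sign bookkeeping for $(-N!)^r$ checking out, including the degenerate cases $T_0(k)=0$ for $k\ge1$) to the alternating identity $\sum_{r=m}^k(-1)^{r-m}\binom{r}{m}\binom{k+1}{r+1}=1$, whose coefficient-extraction proof is sound. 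Since $S_{n-k}$ and $S_{n-k}(x)$ are independent of $r$, the termwise substitution legitimately yields both \eqref{th:exp-n-2} and \eqref{th:exp-p-2} at once. Each approach has a virtue: the paper's derivation is self-contained and shows where $\binom{k+1}{r+1}$ arises, while yours isolates the purely combinatorial content (the passage from positive to nonnegative compositions) and makes transparent why the two expressions in Theorems~\ref{th:express} and~\ref{th:express-2} agree coefficient by coefficient.
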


\noindent 
{\it Remark.}  
When $\chi=\mathbf 1$ is the trivial character of conductor $1$,  (\ref{th:exp-n-2}) is reduced to 
$$
B_{N,n,{\mathbf 1}}=n!\sum_{r=1}^n\binom{n+1}{r+1}(-N!)^r\sum_{i_1+\cdots+i_r=n\atop i_1,\dots,i_r\ge 0}\frac{1}{(N+i_1)!\cdots(N+i_r)!}\,,
$$ 
which is \cite[Proposition 1 (5)]{AK1}. 

\begin{proof}[Proof of Theorem \ref{th:express-2}.]  
We shall prove the expression (\ref{th:exp-p-2}).  
Put  
$$ 
w:=\sum_{n=1}^\infty\frac{N! f^n}{(N+n)!}t^n\,. 
$$ 
Then, we have 
\begin{align*}  
B_{N,n,\chi}(x)&:=\left.\frac{d^n}{d t^n}\frac{1}{f^N}(1+w)^{-1}\sum_{a=1}^f\chi(a)e^{(x+a)t}\right|_{t=0}\\
&=\frac{1}{f^N}\sum_{k=0}^n\binom{n}{k}\left.\frac{d^k}{d t^k}\sum_{l=0}^\infty(-w)^l\right|_{t=0}\left.\frac{d^{n-k}}{d t^{n-k}}\sum_{\ell=0}^\infty S_\ell(x)\frac{t^\ell}{\ell!}\right|_{t=0}\\
&=\frac{1}{f^N}\sum_{k=0}^n\binom{n}{k}\sum_{l=0}^k\sum_{r=0}^l(-1)^r\binom{l}{r}\\
&\qquad\times\left.\frac{d^k}{d t^k}\left(\sum_{m=0}^\infty\frac{N! f^m}{(N+m)!}t^m\right)^r\right|_{t=0}\left.
\sum_{\ell=0}^\infty S_{n-k+\ell}(x)\frac{t^\ell}{\ell!}\right|_{t=0}\\
&=\frac{1}{f^N}\sum_{k=0}^n\binom{n}{k}\sum_{l=0}^k\sum_{r=0}^l(-1)^r\binom{l}{r}\\
&\qquad\times\left.\frac{d^k}{d t^k}\sum_{m=0}^\infty\sum_{i_1+\cdots+i_r=m\atop i_1,\dots,i_r\ge 0}\frac{(N!)^r f^m}{(N+i_1)!\cdots(N+i_r)!}t^m\right|_{t=0}
S_{n-k}(x)\\
&=\frac{1}{f^N}\sum_{k=0}^n\binom{n}{k}\sum_{l=0}^k\sum_{r=0}^l\binom{l}{r}\left.\sum_{m=k}^\infty\frac{m!f^m}{(m-k)!}\widetilde{T}_r(m)t^{m-k}\right|_{t=0}S_{n-k}(x)\\
&=\frac{1}{f^N}\sum_{k=0}^n\binom{n}{k}\sum_{l=0}^k\sum_{r=0}^l\binom{l}{r}k! f^k\widetilde{T}_r(k)S_{n-k}(x)\\
&=\sum_{k=0}^n\frac{k!}{f^{N-k}}\binom{n}{k}\sum_{r=0}^k\left(\sum_{l=r}^k\binom{l}{r}\right)\widetilde{T}_r(k)S_{n-k}(x)\\
&=\sum_{k=0}^n\frac{k!}{f^{N-k}}\binom{n}{k}\sum_{r=0}^k\binom{k+1}{r+1}\widetilde{T}_r(k)S_{n-k}(x)\,. 
\end{align*} 
\end{proof}

\section{Determinants}  

Recurrence formula from Proposition \ref{prp:brec}
  \begin{equation}  
B_{N,n,\chi}=\frac{S_n}{f^N}-\sum_{i=0}^{n-1}\frac{N!n!f^{n-i}}{(N+n-i)!i!}B_{N,i,\chi}
\label{rec:gbn}
\end{equation}
assists us to derive determinant expressions of generalized hypergeometric Bernoulli numbers and polynomials. Such determinant expressions were studied by Glaisher (\cite{Glaisher}) extensively for Bernoulli, Cauchy, Euler and more numbers.  

\begin{theorem}  
For $n\ge 1$,  
\begin{align}
B_{N,n,\chi}&=(-1)^n n!\left|\begin{array}{ccccc}  
\frac{N!}{(N+1)!}f&1&0&&\\  
\frac{N!}{(N+2)!}f^2&\frac{N!}{(N+1)!}f&&&\\
\vdots&&&&0\\
\frac{N!}{(N+n-1)!}f^{n-1}&\cdots&\frac{N!}{(N+2)!}f^2&\frac{N!}{(N+1)!}f&1\\
\widehat S_n&\widehat S_{n-1}&\cdots&\widehat S_2&\widehat S_1
\end{array}\right|\, 
\label{det:ghbn}
\end{align}
where 
$$
\widehat S_n=\frac{1}{f^N}\sum_{a=1}^f\chi(a)\left(\frac{N!}{(N+n)!}f^n-\frac{a^n}{n!}\right).
$$ 
Also
\begin{align}
B_{N,n,\chi}(x)&=(-1)^n n!\left|\begin{array}{ccccc}  
\frac{N!}{(N+1)!}f&1&0&&\\  
\frac{N!}{(N+2)!}f^2&\frac{N!}{(N+1)!}f&&&\\
\vdots&&&&0\\
\frac{N!}{(N+n-1)!}f^{n-1}&\cdots&\frac{N!}{(N+2)!}f^2&\frac{N!}{(N+1)!}f&1\\
\widehat S_n(x)&\widehat S_{n-1}(x)&\cdots&\widehat S_2(x)&\widehat S_1(x)
\end{array}\right|\,
\label{det:ghbp}
\end{align}
where 
$$ 
\widehat S_n(x)=\frac{1}{f^N}\sum_{a=1}^f\chi(a)\left(\frac{N!}{(N+n)!}f^n-\frac{(x+a)^n}{n!}\right)\,.
$$ 
\label{th:det}
\end{theorem}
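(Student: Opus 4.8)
The engine of the proof is the recurrence (\ref{rec:gbn}). Writing $c_j:=N!f^j/(N+j)!$ for $j\ge 0$, so that $c_0=1$ sits on the superdiagonal and the entries of the upper block of the matrix are precisely the $c_j$, the recurrence becomes the convolution relation $\sum_{i=0}^n\frac{n!}{i!}c_{n-i}B_{N,i,\chi}=S_n/f^N$. A one-line computation also shows that the last row is the natural combination $\widehat S_k=f^{-N}\bigl(c_kS_0-S_k/k!\bigr)$, where $S_0=\sum_{a=1}^f\chi(a)$. Only (\ref{det:ghbn}) needs to be proved: (\ref{det:ghbp}) follows verbatim after replacing every $S_k$ by $S_k(x)$, since the polynomial recurrence (\ref{rec2}) has exactly the same shape.

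The plan is to evaluate the determinant of the matrix $M_n$ in (\ref{det:ghbn}) directly, by expanding it along its last row. Because rows $1,\dots,n-1$ form a lower-Hessenberg block with unit superdiagonal ($c_0=1$) and zeros strictly above it, the minor obtained by deleting the last row and the $j$-th column is block lower triangular: the columns to the right of $j$ give a unit-diagonal triangular factor of determinant $1$, the top-right block vanishes since $c_m=0$ for $m<0$, and the columns to the left collapse to the $(j-1)\times(j-1)$ Hessenberg determinant $p_{j-1}:=\det(c_{i-i'+1})_{1\le i,i'\le j-1}$ of the sequence $c_1,c_2,\dots$ (with $p_0=1$). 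The key classical fact I will invoke — or reprove by this same last-row expansion — is the Hessenberg determinant formula for the reciprocal of a power series: if $G(t)=\sum_{k\ge 0}c_kt^k$ and $1/G(t)=\sum_{k\ge 0}\gamma_kt^k$, then $p_m=(-1)^m\gamma_m$. Collecting cofactor signs, the expansion then collapses to $\det M_n=(-1)^{n+1}\sum_{m=0}^{n-1}\gamma_m\widehat S_{n-m}$.

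It remains to identify $(-1)^nn!\det M_n$ with $B_{N,n,\chi}$, and here I would use two convolution identities. First, $G(t)\cdot G(t)^{-1}=1$ gives $\sum_{m=0}^{n-1}\gamma_mc_{n-m}=-\gamma_n$ for $n\ge 1$. Second, the defining generating function factors as $\sum_n B_{N,n,\chi}t^n/n!=f^{-N}E(t)/G(t)$ with $E(t)=\sum_{a=1}^f\chi(a)e^{at}=\sum_k S_kt^k/k!$, which yields $\sum_{m=0}^n\gamma_m S_{n-m}/(n-m)!=f^NB_{N,n,\chi}/n!$. Splitting $\widehat S_{n-m}=f^{-N}\bigl(c_{n-m}S_0-S_{n-m}/(n-m)!\bigr)$ and feeding in these two identities makes the $S_0$-terms cancel and leaves exactly $B_{N,n,\chi}$. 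The main obstacle is the bookkeeping in the middle step: verifying the block-triangular collapse of the deleted-column minors together with the precise power of $-1$, and then tracking the boundary ($m=n$) terms when the two convolutions are truncated at $n-1$. The polynomial statement (\ref{det:ghbp}) is obtained by rerunning the identical argument with $E(t)$ replaced by $\sum_{a=1}^f\chi(a)e^{(x+a)t}=\sum_k S_k(x)t^k/k!$.
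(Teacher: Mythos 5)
Your proposal is correct, but it is organized quite differently from the paper's proof, so a comparison is worthwhile. Both arguments rest on the recurrence (\ref{rec:gbn}) (equivalently, on the factorization of the generating function as $f^{-N}E(t)/G(t)$ with $G(t)=\sum_{k\ge0}N!f^kt^k/(N+k)!$), but the paper proceeds by induction on $n$: it sets $\beta_n=(-1)^nB_{N,n,\chi}/n!$, expands the determinant repeatedly along the \emph{first} row so that the successive minors are determinants of the same shape of sizes $n-1,n-2,\dots$ (equal to $\beta_{n-1},\beta_{n-2},\dots$ by the induction hypothesis), and observes that the resulting alternating sum $\alpha_1\beta_{n-1}-\alpha_2\beta_{n-2}+\cdots+(-1)^{n+1}\widehat S_n$ is precisely the recurrence for $\beta_n$. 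You instead expand once along the \emph{last} row, identify each deleted-column minor in closed form as the block-triangular product $p_{j-1}\cdot 1$ with $p_{j-1}=(-1)^{j-1}\gamma_{j-1}$, where $1/G(t)=\sum_k\gamma_kt^k$, and finish with the two convolutions $\sum_{m=0}^{n-1}\gamma_mc_{n-m}=-\gamma_n$ and $\sum_{m=0}^{n}\gamma_mS_{n-m}/(n-m)!=f^NB_{N,n,\chi}/n!$; I verified the vanishing of the top-right block, the sign $(-1)^{n+1}$ in $\det M_n=(-1)^{n+1}\sum_{m=0}^{n-1}\gamma_m\widehat S_{n-m}$, and the cancellation of the $S_0$-terms, and everything goes through, as does your verbatim passage to (\ref{det:ghbp}) via $S_k\mapsto S_k(x)$ (note $S_0(x)=S_0$). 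The trade-off: the paper's induction is short and entirely self-contained, needing nothing beyond the recurrence; your route is non-inductive at the top level and makes the structure transparent --- the determinant is exhibited as a Cauchy convolution of the coefficients of $1/G$ with the last row, which generalizes immediately to any determinant of this Glaisher/Hessenberg shape --- but it delegates the real work to the classical identity $p_m=(-1)^m\gamma_m$, which, if you choose to reprove it rather than cite it, amounts to essentially the same cofactor-expansion induction that the paper runs directly on $\beta_n$.
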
 

\noindent 
{\it Remark.}  
When $\chi=\mathbf 1$ is the trivial character of conductor $1$, by $\widehat S_n=N!/(N+n)!$, the determinant (\ref{det:ghbn}) is reduced to 
$$
B_{N,n,{\mathbf 1}}=(-1)^n n!\left|\begin{array}{ccccc}  
\frac{N!}{(N+1)!}&1&0&&\\  
\frac{N!}{(N+2)!}&\frac{N!}{(N+1)!}&&&\\
\vdots&&&&0\\
\frac{N!}{(N+n-1)!}&\cdots&\frac{N!}{(N+2)!}&\frac{N!}{(N+1)!}&1\\
\frac{N!}{(N+n)!}&\frac{N!}{(N+n-1)!}&\cdots&\frac{N!}{(N+2)!}&\frac{N!}{(N+1)!} 
\end{array}\right|\,  
$$ 
which is \cite[Theorem 1]{AK1}.

\begin{proof}[Proof of Theorem \ref{th:det}.]  
For simplicity, let
$$
\alpha_i=\frac{N!f^i}{(N+i)!}\quad(i\ge 1)\,,
$$ 
then 
$$
\widehat S_n=\alpha_n\frac{S_0}{f^N}-\frac{S_n}{f^N n!}=\frac{1}{f_N}\left(\alpha_n S_0-\frac{S_n}{n!}\right)\,.
$$ 
Furthermore, by putting $\beta_n=(-1)^n B_{N,n,\chi}/n!$, we shall show that 
\begin{align}  
\beta_n=\left|\begin{array}{ccccc}   
\alpha_1&1&0&&\\
\alpha_2&\alpha_1&&&\\
\vdots&&&&0\\ 
\alpha_{n-1}&\cdots&\alpha_2&\alpha_1&1\\
\widehat S_n&\widehat S_{n-1}&\cdots&\widehat S_2&\widehat S_1
\end{array}\right| 
\label{det:bb} 
\end{align} 
by using the recurrence relation 
\begin{equation}  
\beta_n=\frac{(-1)^{n}S_n}{f^N n!}-\sum_{i=0}^{n-1}(-1)^{n-i}\alpha_{n-i}\beta_i\quad(n\ge 1)\,. 
\label{recrel}
\end{equation} 
By definition, 
$$
\beta_0=B_{N,0,\chi}=\frac{S_0}{f^N}\,.  
$$ 
For $n=1$, by (\ref{recrel}),
$$
\beta_1=\alpha_1\beta_0-\frac{S_1}{f^N}=\widehat S_1\,.  
$$ 
Hence (\ref{det:bb}) is valid.  
Assume that (\ref{det:bb}) is valid up to $n-1$. Then by expanding the determinant along the first row repeatedly, the right-hand side of (\ref{det:bb}) is equal to 
\begin{align*}  
&\alpha_1\beta_{n-1}-\left|\begin{array}{ccccc}   
\alpha_2&1&0&&\\
\alpha_3&\alpha_1&&&\\
\vdots&\vdots&&&0\\ 
\alpha_{n-1}&\alpha_{n-3}&\cdots&\alpha_1&1\\
\widehat S_n&\widehat S_{n-2}&\cdots&\widehat S_2&\widehat S_1
\end{array}\right|\\
&=\alpha_1\beta_{n-1}-\alpha_2\beta_{n-2}+\cdots+(-1)^{n-1}\alpha_{n-2}\beta_2
 +(-1)^n\left|\begin{array}{cc} 
\alpha_{n-1}&1\\
\widehat S_n&\widehat S_1
\end{array}\right|\\
&=\alpha_1\beta_{n-1}-\alpha_2\beta_{n-2}+\cdots+(-1)^{n-1}\alpha_{n-2}\beta_2+(-1)^n\alpha_{n-1}\beta_1+(-1)^{n+1}\widehat S_n\\
&=\beta_n\,. 
\end{align*} 
The last equality is due to the recurrence relation (\ref{recrel}). Then the first relation (\ref{det:ghbn}) is proved.  
The second equation (\ref{det:ghbp}) can be proved similarly, by using the recurrence relation 
$$
\frac{B_{N,n,\chi}(x)}{n!}=\frac{S_n(x)}{f^N n!}-\sum_{i=0}^{n-1}\frac{N!f^{n-i}}{(N+n-i)!}\frac{B_{N,i,\chi}(x)}{i!}
$$ 
or 
$$   
\beta_n(x)=\frac{(-1)^n S_n(x)}{f^N n!}-\sum_{i=0}^{n-1}(-1)^{n-i}\alpha_{n-i}\beta_i(x)\quad(n\ge 1)\,. 
$$ 
\end{proof}

\section*{Acknowledgements} 

The authors are grateful to three anonymous referees for their precious comments and advices. This work has been partly done when the second author (T.K.) visited Harish-Chandra Research Institute in February 2020, where the first author (K. C.) was working before moving to his current institute. T. K. thanks K. C. for the warm hospitality of the institute. 
This work was initiated from a hint by Professor Miho Aoki of Shimane University. Both authors thank her.

\section*{Appendix}  

We derive first few values of $B_{N,n,\chi}$:
\begin{align*}  
B_{N,0,\chi}&=\frac{S_0}{f^N}\,,\\
B_{N,1,\chi}&=-\frac{S_0}{(N+1)f^{N-1}}+\frac{S_1}{f^N}\,,\\ 
B_{N,2,\chi}&=\frac{2 S_0}{(N+1)^2(N+2)f^{N-2}}-\frac{2 S_1}{(N+1)f^{N-1}}+\frac{S_2}{f^N}\,,\\ 
B_{N,3,\chi}&=\frac{3!(N-1)S_0}{(N+1)^3(N+2)(N+3)f^{N-3}}+\frac{3! S_1}{(N+1)^2(N+2)f^{N-2}}-\frac{3 S_2}{(N+1)f^{N-1}}+\frac{S_3}{f^N}\,,\\ 
B_{N,4,\chi}&=\frac{4!(N^3-N^2-6 N+2)S_0}{(N+1)^4(N+2)^2(N+3)(N+4)f^{N-4}}+\frac{4!(N-1)S_1}{(N+1)^3(N+2)(N+3)f^{N-3}}\\
&\qquad +\frac{12 S_2}{(N+1)^2(N+2)f^{N-2}}-\frac{4 S_3}{(N+1)f^{N-1}}+\frac{S_4}{f^N}\,. 
\end{align*}

Now 
\begin{align*}  
B_{N,0}&=1\,,\\
B_{N,1}&=-\frac{1}{N+1}\,,\\ 
B_{N,2}&=\frac{2}{(N+1)^2(N+2)}\,,\\
B_{N,3}&=\frac{3!(N-1)}{(N+1)^3(N+2)(N+3)}\,,\\
B_{N,4}&=\frac{4!(N^3-N^2-6 N+2)}{(N+1)^4(N+2)^2(N+3)(N+4)}\,. 
\end{align*}  
From  (\cite{AK1}) we can see that,
$$
B_{N,n,\chi}=\sum_{k=0}^n\binom{n}{k}\frac{S_{n-k}B_{N,k}}{f^{N-k}}\,, 
$$ 
as shown in Corollary \ref{cor10}.  

When $N=1$ and $\chi$ is not the trivial character, as $S_0=0$, we find that 
\begin{align*}   
B_{1,0,\chi}&=0\,,\\
B_{1,1,\chi}&=\frac{S_1}{f}\,,\\
B_{1,2,\chi}&=-S_1+\frac{S_2}{f}\,,\\
B_{1,3,\chi}&=\frac{f S_1}{2}-\frac{3 S_2}{2}+\frac{S_3}{f}\,,\\
B_{1,4,\chi}&=f S_2-2 S_3+\frac{S_4}{f}\,.
\end{align*} 

\end{document}